
\documentclass[12pt,twoside,a4paper]{amsart}

   %

\usepackage[T1]{fontenc}
\usepackage[latin1]{inputenc}
\usepackage[left=25mm,top=30mm,bottom=30mm]{geometry}
\usepackage{amssymb}
\usepackage{mathtools}
\usepackage{lmodern}
\usepackage[
final,
scrtime
]{prelim2e}

      \theoremstyle{plain}
      \newtheorem{thm}{Theorem}[section]
      \newtheorem{lem}[thm]{Lemma}
      \newtheorem{cor}[thm]{Corollary}
      
      \theoremstyle{definition}

      \theoremstyle{remark}
      
\let\oldsqrt\sqrt
\def\sqrt{\mathpalette\DHLhksqrt}
\def\DHLhksqrt#1#2{%
\setbox0=\hbox{$#1\oldsqrt{#2\,}$}\dimen0=\ht0
\advance\dimen0-0.2\ht0
\setbox2=\hbox{\vrule height\ht0 depth -\dimen0}%
{\box0\lower0.4pt\box2}}
   
   
\begin{document}


   \author[Dinev]{Todor Dinev}
   \address{Universit\"at Trier, Fachbereich IV -- Mathematik, 54296~Trier, Germany}
   \email{\{dinev, mattner\}@uni-trier.de}


    \author[Mattner]{Lutz Mattner}



   

   \title
{The asymptotic Berry-Esseen constant for intervals}


   \begin{abstract}
The asymptotic constant in the Berry-Esseen inequality for interval 
probabilities is shown to be $\sqrt{2/\pi}$.  
   \end{abstract}

   \subjclass[2000]{Primary 60F05; Secondary 60E15}

   \keywords{Berry-Esseen inequality, interval probabilities, asymptotic constant}

\thanks{This research was partially supported by DFG grant MA 1386/3-1.}
\thanks{File name: \texttt{\tiny\jobname.tex}}



   \maketitle



   \section{Introduction}
Let 
$\mathbb R$ denote the 
real numbers, $\mathbb Z$ the integers, and
$\mathbb N$ the integers $\geq1$.  Let $\text{Prob}(\mathbb R)$ be the set of
all (probability) laws on $\mathbb R$, $\delta_x$ the unit mass at $x\in\mathbb
R$, and $\mathrm N_{0, 1}$ the standard normal 
law with distribution function
$\Phi$  and density $\varphi\coloneqq\Phi'$.

For $P\in\text{Prob}(\mathbb R)$ and 
$s\in\mathopen[1, \infty\mathclose[$, 
let $\mu(P)\coloneqq\int x\,P(\mathrm dx)$ and 
$\beta_s(P)\coloneqq\int|x-\mu(P)|^sP(\mathrm dx)$
if $\int |x|\,P(\mathrm dx)<\infty$,
$\beta_s(P)\coloneqq\infty$ otherwise,
$\sigma^2(P)\coloneqq\beta_2(P)$, 
and  $\alpha(P)\coloneqq\int(x-\mu(P))^3P(\mathrm dx)$ 
if $\beta_3(P)<\infty$.
 If $P$ is a lattice law, let $h(P)\coloneqq\sup\bigcup_{a\in\mathbb
   R}\{\eta\in\mathopen]0, \infty\mathclose[:P(a+\eta\mathbb Z)=1\}$,
 otherwise set $h(P)\coloneqq0$.  Let us put
\begin{gather*}
  \mathcal P_s\coloneqq \{P\in\text{Prob}(\mathbb R):0<\beta_s(P)<\infty\}.
\end{gather*}

In the following, we shall consider sequences $(X_k)_{k\in\mathbb N}$ of 
independent and identically distributed real-valued random variables.  
$P$ will then stand for the law of $X_1$ and shall, for the sake of brevity, 
usually be omitted in the quantities $\mu$, $\sigma^2$, $\alpha$, $\beta_s$, and $h$ just defined.  Let $P_n\in\mathrm{Prob}(\mathbb R)$ be given by $P_n(A)\coloneqq P^{*n}(\sigma\sqrt nA+n\mu)$ and set $F_n(x)\coloneqq P_n(\mathopen]-\infty, x\mathclose])$ for $x\in\mathbb R$. Thus $P_n$ is the law of the standardized sum of $X_1, \ldots, X_n$ and $F_n$ its distribution function.  Now the classical Berry-Esseen theorem states the finiteness of the 
 \textbf{Berry-Esseen constant}
\begin{gather*}
  c_{\text{BE}}\coloneqq
\sup_{\substack{P\in\mathcal P_3\\n\in\mathbb N}}\frac{\sigma^3(P)}{\beta_3(P)}\sqrt n\sup_{x\in\mathbb R}|F_n(x)-\Phi(x)|.
\end{gather*}
It is known 
that $0.4097<(\sqrt{10}+3)/(6\sqrt{2\pi})
\leq c_{\text{BE}}<0.4748$,
with the lower bound due to Esseen~\cite{ess1956}
as an obvious consequence of his determination of the 
\textbf{asymptotic Berry-Esseen constant}
\begin{gather*}
  c_{\infty, \text{BE}}\coloneqq \sup_{P\in\mathcal P_3}\frac{\sigma^3(P)}{\beta_3(P)}\lim_{n\to\infty}\sqrt n\sup_{x\in\mathbb R}|F_n(x)-\Phi(x)|=\frac{\sqrt{10}+3}{6\sqrt{2\pi}},
\end{gather*}
and with the more recent upper bound due to Shevtsova~\cite{she2011}. 

The Kolmogorov distance used above is the supremum distance over the system of all unbounded intervals.  We propose to use instead the system $\mathfrak I$ of all intervals whatsoever, which is a special case of the system of all convex sets used in the multidimensional setting.  Due to the continuity properties of probability measures, it suffices to consider intervals of the form $\mathopen]a, b\mathclose]$, that is, we have
\begin{gather}\label{eq:convex}
  \sup_{I\in\mathfrak I}|P_n(I)-\mathrm N_{0, 1}(I)|=\sup_{x, y\in\mathbb R}|F_n(x)-F_n(y)-\Phi(x)+\Phi(y)|.
\end{gather}
An obvious upper bound for the right hand side is
$2c_{\text{BE}}\beta_3/(\sigma^3\sqrt n)$.  While we do not know whether 
here $2c_{\text{BE}}$ can be replaced by a smaller constant, we will determine
below the 
\textbf{asymptotic Berry-Esseen constant for interval probabilities}
as 
\begin{gather}
\label{eq:CBE.convex}
  c_{\infty, \text{BE}}(\mathfrak I)\coloneqq 
\sup_{P\in\mathcal P_3}\frac{\sigma^3(P)}{\beta_3(P)}
 \lim_{n\to\infty}\sqrt n\sup_{I\in\mathfrak I}|P_n(I)-\mathrm N_{0, 1}(I)|
\,=\, \frac2{\sqrt{2\pi}},
\end{gather}
which is strictly smaller than $2c_{\infty, \text{BE}}$.
Let us note that  $c_{\infty, \text{BE}}(\mathfrak I)$ happens to be 
twice the modified asymptotic Berry-Esseen constant
due to  Rogozin~\cite{Rogozin}, 
and a consideration of the symmetric Bernoulli case shows that 
a Rogozin type modification of  $c_{\infty, \text{BE}}(\mathfrak I)$
yields the same value $\sqrt{2/\pi}$ as above.

\section{Results}
For  $P\in\mathcal P_3$,  we shall use the following asymptotic expansion 
due to  Esseen~\cite{ess1945}:
\begin{gather}\label{eq:asy}
  F_n(x)=\Phi(x)+\frac{h}{\sigma\sqrt n}\psi_n(x)\varphi(x)-\frac{\alpha}{6\sigma^3\sqrt{n}}\varphi''(x)+o\left(\frac1{\sqrt n}\right)\qquad(n\to\infty)
\end{gather}
uniformly in $x\in\mathbb R$, where, if $h>0$, $\psi_n$ is 
a certain  $h/(\sigma\sqrt n)$-periodic function: 
If $a\in\mathbb R$ is such that $P(a+h\mathbb Z)=1$, then
$\psi_n(x)=1/2-\mathrm{frac}(x\sigma\sqrt n/h-an/h)$, where
$\mathrm{frac}(t)\coloneqq t-\lfloor t\rfloor$ for $t\in\mathbb R$.  As 
observed in~\cite{ess1956}, it then  follows that 
\[\lim_{n\to\infty}\sqrt n\sup_{x\in\mathbb R}|F_n(x)-\Phi(x)|
=\frac1{\sqrt{2\pi}}\left(\frac h{2\sigma}+\frac{|\alpha|}{6\sigma^3}\right).\]
The analogous result for interval probabilities is:
\begin{thm}\label{thm:limit}
    Let $P\in\mathcal P_3$.  Then
  \begin{multline*}
    \lim_{n\to\infty}\sqrt n\sup_{I\in\mathfrak I}|P_n(I)-\mathrm N_{0,
      1}(I)|\\=\frac1{\sqrt{2\pi}} 
    \begin{cases} 
      \frac h\sigma&\text{if }|\alpha|\leq h\sigma^2,\\ 
\frac
h{2\sigma}+\frac{|\alpha|}{6\sigma^3}+\frac{|\alpha|}{3\sigma^3}\exp\Bigl(-\frac32\bigl(1-\frac{h\sigma^2}{|\alpha|}\bigr)\Bigr)&\text{if
}|\alpha|> h\sigma^2.
    \end{cases}
  \end{multline*}
\end{thm}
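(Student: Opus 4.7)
The plan is to substitute the Esseen expansion \eqref{eq:asy} into the right-hand side of \eqref{eq:convex} and to analyse the resulting pointwise difference. Writing
\[
g_n(x)\coloneqq \frac{h}{\sigma}\psi_n(x)\varphi(x)-\frac{\alpha}{6\sigma^3}\varphi''(x),
\]
the expansion gives $\sqrt n\bigl(F_n(x)-F_n(y)-\Phi(x)+\Phi(y)\bigr)=g_n(x)-g_n(y)+o(1)$ uniformly in $x,y\in\mathbb R$, and hence by \eqref{eq:convex} the left-hand side of the theorem equals $\lim_{n\to\infty}\bigl(\sup g_n-\inf g_n\bigr)$, provided this limit exists. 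The non-lattice case $h=0$ is easier since the $\psi_n$ term vanishes; henceforth assume $h>0$ and, after the symmetry $(\alpha,x)\mapsto(-\alpha,-x)$, also $\alpha\geq 0$.

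The central observation is that $\psi_n$ is $h/(\sigma\sqrt n)$-periodic with image $(-1/2,1/2]$, while $\varphi$ and $\varphi''$ are uniformly continuous and decay at infinity. This should give
\[
\lim_{n\to\infty}\sup g_n=\sup_{x\in\mathbb R}\Bigl[\frac{h}{2\sigma}\varphi(x)-\frac{\alpha}{6\sigma^3}\varphi''(x)\Bigr],\quad \lim_{n\to\infty}\inf g_n=-\sup_{x\in\mathbb R}\Bigl[\frac{h}{2\sigma}\varphi(x)+\frac{\alpha}{6\sigma^3}\varphi''(x)\Bigr].
\]
For the first limit, the upper estimate is immediate from $\psi_n\leq 1/2$. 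For the matching lower estimate, fix a maximizer $x_*$ of the right-hand side (which exists by Gaussian decay) and, for each $n$, pick $x_n$ with $\psi_n(x_n)=1/2$ and $|x_n-x_*|<h/(\sigma\sqrt n)$; such $x_n$ form an arithmetic progression of step $h/(\sigma\sqrt n)$, and uniform continuity of $\varphi,\varphi''$ yields $g_n(x_n)\to\sup_x[\,\ldots\,]$. The $\inf$ statement is analogous, using $\psi_n(x_n)$ arbitrarily close to $-1/2$.

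It remains to evaluate the two suprema. Writing $\varphi''(x)=(x^2-1)\varphi(x)$ and setting $A\coloneqq h/(2\sigma)$, $B\coloneqq\alpha/(6\sigma^3)$, the first is $\sup_x(A+B-Bx^2)\varphi(x)$, which is attained at $x=0$ with value $(A+B)/\sqrt{2\pi}$. The second, $\sup_x(A-B+Bx^2)\varphi(x)$, has critical points $x=0$ (value $(A-B)/\sqrt{2\pi}$) and, when $A<3B$, $x^2=3-A/B$ with the larger value $2Be^{-(3-A/B)/2}/\sqrt{2\pi}$. Adding the two suprema in the regime $A\geq 3B$, equivalently $|\alpha|\leq h\sigma^2$, yields $2A/\sqrt{2\pi}=h/(\sigma\sqrt{2\pi})$; in the complementary regime, using $A/(3B)=h\sigma^2/|\alpha|$, one recovers the second branch of the theorem. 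The main technical issue is the equidistribution step above; once the static maximizer is known to lie in a bounded set (by Gaussian decay), the rest is one-variable calculus.
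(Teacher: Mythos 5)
Your proposal is correct and follows essentially the same route as the paper: substitute Esseen's expansion into the interval form of the supremum, use the vanishing period of $\psi_n$ together with the continuity of $\varphi$ and $\varphi''$ to replace $\psi_n$ by its extreme values $\pm\tfrac12$, and reduce to two one-variable maximizations of $(A\pm(B x^2-B))\varphi(x)$, whose case analysis at $A\gtrless 3B$ (i.e.\ $|\alpha|\lessgtr h\sigma^2$) reproduces the two branches. Your $\sup g_n-\inf g_n$ formulation is just the paper's $\sup_{x,y}$ of the antisymmetric difference written out, and your equidistribution argument for the matching lower estimate makes explicit what the paper compresses into one sentence.
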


\begin{proof}
We may assume  $\alpha\geq0$, $\sigma=1$, and $\mu=0$.  
On the right of~\eqref{eq:convex}, 
we may  omit the absolute value signs, since 
for a function $f$ satisfying $f(x, y)=-f(y, x)$, 
we have $\sup_{x, y}|f(x, y)|=\sup_{x, y}f(x, y)\vee(-f(x, y))
=\sup_{x, y}f(x, y)\vee\sup_{x, y}f(y, x)=\sup_{x, y}f(x, y)$.
Thus, using~\eqref{eq:asy}, we get (reading backwards
to prove existence of the limits)
\begin{align*}
 \lim_{n\rightarrow\infty} 
2&\sqrt n\sup_{I\in\mathfrak I}|P_n(I)-\mathrm N_{0, 1}(I)| \\
 &\,=\, \lim_{n\rightarrow\infty} \sup_{x,y\in\mathbb R}\left(
  2h\,\bigl(\psi_n(x)\varphi(x)-\psi_n(y)\varphi(y)\bigr)
 -\frac{\alpha}3\bigl(\varphi''(x)-\varphi''(y)\bigr)\right)\\
 &\,=\,\sup_{x,y\in\mathbb R}\left( h\varphi(x)-\frac{\alpha}3\varphi''(x)
  +h\varphi(y)+\frac{\alpha}3\varphi''(y)\right) \\
 &\,=\, \sup_{y\in\mathbb R} f(y)
\end{align*}
where  $f(y) \coloneqq  \frac1{\sqrt{2\pi}}\left(h+\alpha/3\right)
  +h\varphi(y)+\alpha\varphi''(y)/3$ and where,
if $h>0$, 
the second equality above is true since 
$\sup_{x\in\mathbb R}\psi_n(x)=1/2$,  
$\inf_{x\in\mathbb R}\psi_n(x)=-1/2$, 
the period length of $\psi_n$ tends to zero,
and the function occurring on the right is continuous.

If $\alpha=0$, then $f$ is maximized at $y=0$ with the value
$f(0)=2h/\sqrt{2\pi}$.  
Let now $\alpha>0$. Then  
$f'(y)=h\varphi'(y)+\alpha\varphi'''(y)/3
=-{\alpha}y\varphi(y)(y^2-3+3h/\alpha)/3$.  
If also $\alpha\leq h$, then $\{f'>0\}=\mathopen]-\infty, 0\mathclose[$, and
$f$ is again maximized at $y=0$, with the same value as above.
If instead $\alpha>h$, then $\{f'>0\}=\mathopen]-\infty,
-y_0\mathclose[\cup\mathopen]0, y_0\mathclose[$ with $y_0\coloneqq
\sqrt{3-3h/\alpha}$, 
and   the maximal value of $f$ is 
  \begin{gather*}
f(\pm y_0)=\frac1{\sqrt{2\pi}}\Biggl(h+\frac{\alpha}3+\frac{2\alpha}3\exp\biggl(-\frac32\Bigl(1-\frac h{\alpha}\Bigr)\biggr)\Biggr).\qedhere
  \end{gather*}
\end{proof}

For completeness we present a short proof of a classical fact needed below:

\begin{lem}[von Mises \cite{mis1938}]\label{lem:von-mises}
Let $P\in\mathrm{Prob}(\mathbb R)$ be discrete and let $\eta \in\mathopen[0,
\infty\mathclose[$ be such that $0<|x-y|<\eta$ implies 
$P(\{x\}) P(\{y\}) =0$. Let  $s\in\mathopen[1, \infty\mathclose[$.
Then
  \begin{gather}\label{eq:von-mises}
    \eta\beta_s\leq2\beta_{s+1}.
  \end{gather}
Finite equality $\eta\beta_s=2\beta_{s+1}<\infty$ holds iff $s=1$ and 
$P=\lambda\delta_x+(1-\lambda)\delta_{x+\eta}$ for some $\lambda\in[0, 1]$ and
$x\in\mathbb R$, or $s>1$ and 
$P\in\bigcup_{x\in\mathbb R}\{\delta_x, (\delta_x+\delta_{x+\eta})/2\}$.
\end{lem}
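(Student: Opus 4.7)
The plan is to decompose the centered measure $P$ as a mixture of mean-zero two-atom sub-measures, reduce the inequality to the two-atom case by linearity of the absolute moments under such mixtures, and handle equality by tracing back through both steps.

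Assume without loss of generality $\mu(P)=0$ and $\beta_{s+1}<\infty$. I will enumerate the positive atoms as $y_i$ with masses $p_i^+$, the negative atoms as $-z_j$ (with $z_j>0$) with masses $p_j^-$, and set $p_0\coloneqq P(\{0\})$ together with $M\coloneqq\sum_i p_i^+y_i=\sum_j p_j^- z_j$ (these sums are equal by $\mu=0$; if $M=0$ then $P=\delta_0$ and the claim is trivial). I then introduce the mean-zero two-atom measures and the transport weights
\[
Q_{y,z}\coloneqq\frac{z}{y+z}\delta_y+\frac{y}{y+z}\delta_{-z},\qquad \pi(i,j)\coloneqq\frac{p_i^+\,p_j^-\,(y_i+z_j)}{M},
\]
and verify by a short marginal computation that $P=p_0\,\delta_0+\sum_{i,j}\pi(i,j)\,Q_{y_i,z_j}$. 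Since each $Q_{y,z}$ has mean zero, $\beta_s$ and $\beta_{s+1}$ are linear over this mixture, so the problem reduces to the two-atom inequality $(y+z)\,\beta_s(Q_{y,z})\leq 2\,\beta_{s+1}(Q_{y,z})$ combined with $y_i+z_j\geq\eta$, the latter holding because $y_i$ and $-z_j$ are distinct atoms of $P$.

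The two-atom step is routine: $\beta_s(Q_{y,z})=\frac{yz}{y+z}(y^{s-1}+z^{s-1})$ and $\beta_{s+1}(Q_{y,z})=\frac{yz}{y+z}(y^s+z^s)$, so the desired inequality collapses to $(y-z)(y^{s-1}-z^{s-1})\geq 0$, with equality iff $s=1$ or $y=z$. Tracing equality backwards: when $s=1$ the two-atom inequality is automatically sharp, so I only need $y_i+z_j=\eta$ on the support of $\pi$, which forces $P$ to have at most one positive and one negative atom, at mutual distance exactly $\eta$, and no mass at $0$ (which would create a pair of atoms at distance less than $\eta$ and so contradict the hypothesis); when $s>1$ I additionally need $y_i=z_j=\eta/2$ for each contributing pair, and then the separation hypothesis together with $\mu=0$ forces $P=(\delta_{-\eta/2}+\delta_{\eta/2})/2$.

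The only step demanding any genuine thought is setting up the transport plan $\pi$ so that its marginals reconstruct exactly the positive and negative parts of $P$; everything else is elementary algebra or careful bookkeeping.
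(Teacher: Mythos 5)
Your proof is correct, and it takes a genuinely different route from the paper's. The paper symmetrizes: with $X,Y$ independent copies of the centered law it gets $2\beta_2=\mathbb E(X-Y)^2\geq\eta\,\mathbb E|X-Y|\geq\eta\beta_1$ (the first inequality because $X-Y$ is either $0$ or at least $\eta$ in modulus, the second by conditioning on $X$), and then transfers this from the pair $(1,2)$ to the pair $(s,s+1)$ via strict log-convexity of $t\mapsto\beta_t$ (H\"older/Lyapunov), which simultaneously identifies the equality cases: the convexity step is tight iff $s=1$, the symmetrization step iff $P$ is the two-point law $\lambda\delta_x+(1-\lambda)\delta_{x+\eta}$. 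You instead decompose the centered law directly as a mixture of mean-zero two-atom measures $Q_{y_i,z_j}$ --- your weights $\pi(i,j)$ do have the right marginals, and $p_0+\sum_{i,j}\pi(i,j)=1$, so the representation is sound --- and then prove the inequality pairwise, where it reduces to $(y-z)(y^{s-1}-z^{s-1})\geq0$ together with $y_i+z_j\geq\eta$. The paper's argument is shorter and needs no bookkeeping, but it leans on Lyapunov's inequality and an indirect reduction; yours is entirely elementary, treats every $s$ on the same footing, and makes the source of the inequality (and of both branches of the equality condition) visible at the level of individual pairs of atoms straddling the mean. The only things worth tightening in a written version are the degenerate cases you wave at ($\eta=0$, and $M=0$ giving $P=\delta_0$, which must be checked against the stated equality forms for both $s=1$ and $s>1$), plus an explicit word that $\beta_{s+1}<\infty$ forces $\beta_s<\infty$ so the weighted sums converge; none of these is a real gap.
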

\begin{proof}
We may assume 
$\eta>0$ and $P\in\mathcal P_{s+1}\setminus 
\{(\delta_x+\delta_{y})/2 :x, y\in\mathbb R\}$, 
all other cases being trivial, and then also $\mu=0$.
Let $X,Y$ be independent with law $P$.
Then $|X-\mu|=|X|$ is not constant almost surely, 
and hence the map $[1,s+1]\ni t \mapsto \log \beta_t \in\mathbb R$ is strictly
convex by H\"older's inequality, yielding  
$\beta_{s+1}/\beta_s\ge \beta_2/\beta_1$, with equality iff $s=1$.
We also have $2\beta_2 = 
\mathbb E(X-Y)^2\ge \eta\mathbb E|X-Y|\geq \eta\mathbb E|X|=\eta\beta_1$,
with equality in the first inequality iff  
$P$ is of the form  
$ \lambda\delta_x+(1-\lambda)\delta_{x+\eta}$,
in which case equality holds throughout.
Hence \eqref{eq:von-mises} holds, with the 
discussion of equality.
\end{proof}

\begin{cor} Identity~\eqref{eq:CBE.convex} is true, 
with the $\sup_{P\in\mathcal P_3}$ 
attained precisely for $P\in\{(\delta_{x}+\delta_{y})/2:x, y\in\mathbb R, x\neq y\}$.
\end{cor}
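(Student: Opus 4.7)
The plan is to substitute the explicit formula of Theorem~\ref{thm:limit} into the defining supremum of~\eqref{eq:CBE.convex} and optimize over $P\in\mathcal P_3$. As in the proof of that theorem, by standardizing one may work with $\sigma=1$, $\mu=0$, and $\alpha\ge0$, and setting
\[
\ell(P) \;\coloneqq\; \sqrt{2\pi}\,\frac{\sigma^3}{\beta_3}\cdot\lim_{n\to\infty}\sqrt n\sup_{I\in\mathfrak I}|P_n(I)-\mathrm N_{0,1}(I)|,
\]
the goal becomes $\sup_{P\in\mathcal P_3}\ell(P)=2$, attained precisely on $\{(\delta_x+\delta_y)/2:x\neq y\}$. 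Theorem~\ref{thm:limit} splits this optimization into the two regimes $\alpha\le h\sigma^2$ and $\alpha>h\sigma^2$, which I will treat separately.

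In the first regime one reads off $\ell(P)=h\sigma^2/\beta_3$, and the bound $\ell(P)\le 2$ follows directly from Lemma~\ref{lem:von-mises} applied with $s=2$ and $\eta=h$: a lattice law of span $h$ has any two distinct atoms at distance $\ge h$, so the hypothesis holds and the lemma yields $h\beta_2\le 2\beta_3$. The equality clause for $s>1$ then identifies the extremizers within $\mathcal P_3$ as $P=(\delta_x+\delta_{x+h})/2$, i.e.\ the family named in the statement. A direct check ($\alpha=0$ and $h\sigma^2=h^3/4=2\beta_3$) confirms that each such $P$ does lie in the first regime and realizes $\ell(P)=2$.

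In the second regime, set $r\coloneqq h\sigma^2/\alpha\in[0,1)$ and rewrite the theorem's bracket as $\alpha\,g(r)$ with
\[
g(r)\;=\;\frac{r}{2}+\frac{1}{6}+\frac{1}{3}\exp\!\bigl(-\tfrac32(1-r)\bigr),
\]
so that $\ell(P)=(\alpha/\beta_3)\,g(r)$. The triangle inequality gives $\alpha\le\beta_3$, while $g'(r)=\tfrac12+\tfrac12 e^{-3(1-r)/2}>0$ together with $g(1)=1$ gives $g(r)<1$ on $[0,1)$. Hence $\ell(P)<1<2$ throughout this regime, so no further extremizer appears there. The same estimate absorbs the non-lattice case $h=0$ (with $\alpha>0$); the degenerate sub-case $\alpha=h=0$ makes $\ell(P)=0$ trivially.

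No genuine obstacle is expected: the argument is a short case split followed by two elementary estimates. The one step deserving care is the choice $\eta=h$ in Lemma~\ref{lem:von-mises}, which is both legal (distinct atoms of a span-$h$ lattice law are at distance $\ge h$) and sharp enough that its equality clause aligns exactly with the claimed extremizers.
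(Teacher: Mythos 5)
Your proof is correct and takes essentially the same route as the paper: the case split from Theorem~\ref{thm:limit}, Lemma~\ref{lem:von-mises} with $s=2$ and $\eta=h$ (together with its equality clause) in the regime $|\alpha|\le h\sigma^2$, and the strict bound $\sqrt{2\pi}\,\sigma^3L/\beta_3<1$ via $|\alpha|\le\beta_3$ in the regime $|\alpha|>h\sigma^2$. Your reparametrization $r=h\sigma^2/\alpha$ with the monotonicity of $g$ is only a cosmetic variant of the paper's direct chain of inequalities.
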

\begin{proof}
Let $P\in\mathcal P_3$ and let $L$ denote the limit in  
Theorem~\ref{thm:limit}.
If $|\alpha|\leq h\sigma^2$, then 
Lemma~\ref{lem:von-mises} with $s=2$
and $\eta=h$
yields 
$\sqrt{2\pi}L=h/\sigma\leq2\beta_3/\sigma^3$,
with equality in the last step iff $P$ is of  the form 
$(\delta_{x}+\delta_{y})/2$. 
If $|\alpha|>h\sigma^2$, 
then $\sqrt{2\pi} L 
<h/(2\sigma)+|\alpha|/(6\sigma^3)+|\alpha|/(3\sigma^3)
<|\alpha|/\sigma^3\leq\beta_3/\sigma^3$.
\end{proof}

\end{document}